\newcommand{\R}{{\mathbb R}}
\newcommand{\abs}[1]{\left| #1 \right|}
\renewcommand{\vec}[1]{\mathbf{#1}}
\newtheorem{theorem}{Theorem}[section]
\newtheorem{lemma}[theorem]{Lemma}
\newtheorem{proposition}[theorem]{Proposition}
\newtheorem{corollary}[theorem]{Corollary}
\theoremstyle{remark}
\theoremstyle{remark}
\newcommand{\norm}[1]{\left|\left|#1\right|\right|}
\newcommand{\cl}{{\mathrm{cl}\,}}
\title{Log-Concave Duality in Estimation and Control\\}
\author{Robert Bassett\thanks{rbassett@math.ucdavis.edu}}
\author{Michael Casey\thanks{msc.sedcontra@gmail.com}}
\author{Roger J-B Wets\thanks{rjbwets@ucdavis.edu}}
\affil{Department of Mathematics, Univ. California Davis}
\date{}
\begin{document}
\maketitle

\begin{abstract} In this paper we generalize the estimation-control
duality that exists in the linear-quadratic-Gaussian setting.
We extend this duality to maximum a posteriori estimation of the
system's state, where the measurement and
dynamical system noise are independent log-concave random variables.
More generally, we show that a problem which induces 
a convex penalty on noise terms will have a dual control problem.
We provide conditions for strong duality to hold, and then prove
relaxed conditions for the piecewise linear-quadratic case.
The results have applications in estimation problems with nonsmooth
densities, such as log-concave maximum likelihood densities.
 We conclude with an example reconstructing optimal
estimates from solutions to the dual control problem, which has
implications for sharing solution methods between the two types of
problems. 
\end{abstract}

\section{Introduction}
We consider the problem of estimating the state of a noisy dynamical
system  based only on noisy measurements of the system. In this paper,
we assume linear dynamics, so that the progression of state
variables is
\begin{align} \label{estdynamics}
\vec{X}_{t+1} =& F_{t}(\vec{X}_{t}) + \vec{W}_{t+1}, \; \; \;
t=0,...,T-1 \\
\vec{X}_{0} =& \vec{W}_{0}
\end{align}
$\vec{X}_{t}$ is the state variable--a random vector indexed by a discrete time-step
$t$ which ranges from $0$ to some final time $T$. 
All of the results in this paper still hold in the case that the 
dimension of $\vec{X}_{t}$
is time-dependent, but for notational convenience we will assume that
$\vec{X}_{t} \in \mathbb{R}^{n_{x}}$ for $t=0,...,T$. $F_{t}$ is then a
$n_{x} \times n_{x}$ real-valued matrix that, though it may vary with
time, is known a priori. The $\vec{W}_{t}$ term is a random vector in $\mathbb{R}^{n_x}$ that
represents noise in the system dynamics. Note that, in this
formulation, the random vectors $\vec{W}_{t}$ are \emph{primitive}, in
the sense that they generate all of the randomness associated with the
problem. The state variables $\vec{X}_{t}$ are secondary, being
derived from applying dynamic equations to $\vec{W}_{t}$ terms.

In addition to the dynamics that govern the state progression, we also
have a measurement process which dictates the observable information
at time $t$. We assume that the measurement process is linear.
\begin{equation} \label{estmeasure}
\vec{Z}_{t} = H_{t}(\vec{X}_{t}) + \vec{V}_{t}
\end{equation}
The vector $\vec{Z}_{t}$ is a (secondary) random vector of dimension $n_{z}$. 
Again, we can consider the case that the dimension of $\vec{Z}_{t}$ changes with
time, but for notational convenience we will assume that the
measurements have a fixed dimension. $H_{t}$ is than an $n_{z} \times
n_{x}$ matrix, which similar to $F_{t}$ may vary with time but is
known in advance. $\vec{V}_{t}$ is a primitive random vector of
dimension $\mathbb{R}^{n_{z}}$ that represents measurement noise. 

Different information structures in this setup correspond to different
types of estimation problem. In this paper we consider the smoothing
problem, which consists of estimating of
$\vec{X_{0}},...,\vec{X}_{T}$
after all measurement variables $\vec{Z}_{0},...\vec{Z}_{T}$ have been
observed. In this sense, the information associated with the problem
is constant--the set of measurements which we use to
estimate $\vec{X}_{0}$ is the same as the measurements with which we
estimate $\vec{X}_{T}$ This differs from the filtering problem,
which is one of sequential state estimation. In the filtering problem,
the set of available measurements depends on the time of the state
being estimated. Of course, the difference between these problems 
can be formulated in terms of measurability with respect to certain
filtrations, but we avoid this language because our main problem of
interest will end be deterministic.

Kalman, in his seminal paper \cite{Kalman},
assumed that there was no measurement noise associated with system, so
that $\vec{V}_{t} \equiv 0$. Motivated by minimizing mean-squared
error, he sought to find the conditional expectation of the states
given the measurement.
Under the assumption that the dynamic and measurement noise are
Gaussian, conditional expectation reduces to a deterministic maximum a posteriori
(MAP) problem, in which the optimal estimate is the mode of the
conditional density
$$f_{X}(\vec{X}_{0},...,\vec{X}_{T}|(\vec{Z}_{0},...,\vec{Z}_{T})=(z_{0},...,z_{T}))$$

Assuming that $\vec{W}_{t} \sim \mathcal{N}(0,Q_{t})$ and
$\vec{V}_{t} \equiv 0$, the problem can be derived explicitly, as in
\cite{Cox}. We are left with the following optimization problem:

\begin{align*} \label{prob:KEst} \tag{$\mathcal{P}_{Kal}$} 
\min_{x_{0},...,x_{T}} & \sum_{t=0}^{T}  \frac{1}{2} w_{t}' Q_{t}^{-1}
w_{t} \\
\text{subject to } & x_{t+1} = F_{t} x_{t} + w_{t+1}, \; \;
t=0,...,T-1\\
&x_{0} = w_{0} \\
& z_{t} = H_{t} x_{t}, \; \; t=0,...,T
\end{align*}

In this formulation, the variables $x_{t}$ and $w_{t}$ are estimates
of the random variables $\vec{X}_{t}$ and $\vec{W}_{t}$, respectively.
In this sense, $w_{t}$ is also a decision variable, but we prefer to
write the problem in this reduced formulation where a decision
$x_{0},...,x_{T}$ generates the variables $w_{0},...,w_{T}$.

Kalman observed that the Linear Quadratic Regulator problem
\begin{align*} \label{prob:KLQR} \tag{$D_{Kal}$}
\min_{u_{0},...,u_{T}} &\sum_{t=0}^{T}  \frac{1}{2} y_{t}' Q_{t} y_{t} \\ 
\text{subject to } & y_{t+1} = F_{t}' y_{t} + H_{t}' u_{t+1} \\
& y_{0} = H_{0}' u_{0}
\end{align*}
over controls $\{u_{t}\}_{t=0}^{T}$ and states $\{y_{t}\}_{t=0}^{T}$,
is \emph{dual} to the estimation problem above. Kalman defined this
duality in terms of the equations that characterize their solutions:
the algebraic Riccati equation which characterizes the value function of
\eqref{prob:KLQR} is the same equation that governs the propagation of
the variance of the estimate in \eqref{prob:KEst}, with a
time reversal. Since the Linear Quadratic Regulator problem is one of
optimal control, the relationship between the problems is
described as duality between estimation and control, in the
Linear-Quadratic Gaussian setting.

Compared to the equation-correspondence duality which is typical in
the engineering literature \cite{Todorov},
\cite{LinEstControl} , we take a different approach by 
using the duality theory of convex programming. The allows us to
extend the duality of estimation and control to the more general
setting where noise and measurement noise have \emph{log-concave}
densities. This includes the Linear-Quadratic Gaussian 
framework, but by viewing the duality in a convex-analytic
framework we gain more insight into the relationship between
estimation and control. Previous literature has focused almost
exclusively on either equation-correspondence duality or convex
analytic duality between estimation and control problems. We will
investigate the relationship 
between these two notions of duality in a future paper, but for now
consider the convex-analytic case.



The rest of this paper is organized as follows. In section 2 we state
and prove the main result of the paper: a duality result between
estimation and optimal control when the noise terms in \eqref{estmeasure},
\eqref{estdynamics} are
log-concave. Section 3 applies this result to the case where the
noise terms have densities which are exponentiated monitoring
functions, so that no constraint qualification is required for strong
duality. Section 4 contains a practical example, where the solution to
the optimal control problem is used to generate an optimal state estimate.


We conclude this section by establishing some definitions and notations that we will use
throughout the rest of the paper. Recall that a function is called
\emph{lower semi-continuous} (lsc) if for every $x$ in its domain,
$$\liminf_{x^{\nu} \to x} f(x^{\nu}) \geq f(x)$$
for every sequence $x^{\nu} \to x$. In addition, recall that a convex function
which takes extended real-values is called \emph{proper} if it is not
identically $\infty$ and never takes the value $- \infty$
\cite{VarAnal}.  In keeping with convex-analytic literature, we refer
to the domain of an extended-valued convex function as the set where
it assumes a finite value.

Lastly, given a convex function $f: \R^n \to (-\infty, \infty]$,
 we denote by $f^{*}$ the convex conjugate of $f$, which is defined as
$$f^{*}(y) = \sup_{x \in \R^n} \left\{ \langle x, y \rangle - f(x)
\right\}.$$
Conjugation is ubiquitous in convex-analytic duality theory, and this
paper is no exception. For more details and background on conjugation
and its relation to duality, the reader
should consult any of \cite{ConjDual} \cite{ConvAnal} \cite{VarAnal}.



\section{Estimation with Convex Penalties}

In this section we consider the case where the 
 the random vectors $\vec{W}_{t}$ and $\vec{V}_{t}$ in
\eqref{estdynamics} and \eqref{estmeasure} 
have log-concave density functions. Recall that a function $\phi:\R^d \to
\R$ is \emph{log-concave} if 
$$ \phi(x) = \exp -f(x)$$
where $f: \R^d \to (-\infty, \infty]$ is a convex function.
By convention, we adopt that $e^{-\infty} =0$.

The collection of random vectors with log-concave densities is broad
 enough to include
many commonly used distributions, such as the normal,
Laplace, and exponential \cite{LogConcaveApp}. Moreover, it
is closed with respect to taking marginals, convolutions, and forming
product measures \cite{UniConvApp}. 
These characteristics make MAP estimation in the presence of log-concave 
noise much more amenable to computation than the more general unimodal
class, because they guarantee that conditional
expectations, sums of random variables, and joint densities formed by
independent log-concave random variables remain log-concave. These are
exactly the operations performed when considering MAP estimation in the presence
of linear dynamics and measurements. The broader class of unimodal
distributions, on the other hand, does not enjoy these properties,
making them much more difficult to work with in the context of
discrete-time state estimation.

Nonparametric density estimation
within the class of log-concave random vectors also has attractive theoretical and computational
properties. We will not review those results here (see, for example,
\cite{CRANPackage, pal, rufibach}), but we do comment that the results in the later
sections find rich applications in nonparametric log-concave density
estimation, particularly because of their non-smooth nature. 

The maximum a posteriori estimation of the states,
given measurements $z_{0},...,z_{T}$ can be derived similarly to the
Gaussian case.
\begin{proposition}
Assume that $\vec{W}_{t}$ and $\vec{V}_{t}$ are independent and
have log-concave densities $e^{-f_{t}}$ and $e^{-g_{t}}$,
respectively, for $t=0,...,T$. Then the maximum a posterior estimate
of the states $\vec{X}_{0},...,\vec{X}_{T}$ given
$(\vec{Z}_{0},...,\vec{Z}_{T})=(z_{0},...,z_{T})$ is given by the
solution to the problem
\begin{align*} \label{prob:EstConv} \tag{$\mathcal{P}$}
\min_{x_{0},...,x_{T}} & \sum_{t=0}^{T} f_{t}(w_{t}) +
\sum_{t=0}^{T} g_{t}(z_{t} - H_{t} x_{t}) \\
\text{subject to } & x_{t+1} = F_{t} x_{t} + w_{t+1}, \; \;
t=0,...,T-1\\
& x_{0} = w_{0} \\
\end{align*}
Equivalently, one can use an extended formulation, minimizing over
$w_{t}$ and $x_{t}$, or simply minimizing in $w_{t}$.
\end{proposition}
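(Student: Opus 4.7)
The plan is a direct calculation: compute the joint density of the primitive noises, change variables to the states, condition on the observed measurements via Bayes' rule, and identify the mode.

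First I would use the independence of the primitive noise vectors $\vec{W}_0,\dots,\vec{W}_T,\vec{V}_0,\dots,\vec{V}_T$ to write their joint density as the product
\[
\prod_{t=0}^{T} e^{-f_t(w_t)} \cdot \prod_{t=0}^{T} e^{-g_t(v_t)}
= \exp\Bigl(-\sum_{t=0}^T f_t(w_t) - \sum_{t=0}^T g_t(v_t)\Bigr).
\]
Next I would change variables. The map $(w_0,\dots,w_T)\mapsto(x_0,\dots,x_T)$ induced by $x_0=w_0$ and $x_{t+1}=F_t x_t + w_{t+1}$ is invertible (solve $w_{t+1}=x_{t+1}-F_t x_t$) with lower-triangular Jacobian having ones on the diagonal, hence unit determinant. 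Similarly, for each fixed $x_t$, the map $v_t\mapsto z_t=H_t x_t + v_t$ has unit Jacobian. Substituting, the joint density of $(\vec{X}_0,\dots,\vec{X}_T,\vec{Z}_0,\dots,\vec{Z}_T)$ at $(x_0,\dots,x_T,z_0,\dots,z_T)$ is
\[
\exp\!\Bigl(-f_0(x_0) - \sum_{t=1}^T f_t(x_t - F_{t-1}x_{t-1}) - \sum_{t=0}^T g_t(z_t - H_t x_t)\Bigr).
\]

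Then I would apply Bayes' rule. The conditional density of $(\vec{X}_0,\dots,\vec{X}_T)$ given $(\vec{Z}_0,\dots,\vec{Z}_T)=(z_0,\dots,z_T)$ is the ratio of the joint density above to the marginal density of $(\vec{Z}_0,\dots,\vec{Z}_T)$ at $(z_0,\dots,z_T)$; the denominator is a constant in $(x_0,\dots,x_T)$ and hence does not affect the argmax. Maximizing the exponential is equivalent to minimizing its negative log argument, and rewriting $f_0(x_0)+\sum_{t\geq 1} f_t(x_t-F_{t-1}x_{t-1})$ in terms of the abbreviations $w_0=x_0$, $w_{t+1}=x_{t+1}-F_t x_t$ produces exactly the objective of \eqref{prob:EstConv} subject to its constraints.

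The only real subtlety is ensuring the conditional density is well-defined (so that the MAP estimate makes sense), which requires the marginal density of $(\vec{Z}_0,\dots,\vec{Z}_T)$ at the observed point to be positive; since this is a property of the observed measurements rather than the decision variables, it does not affect the optimization problem. The equivalence of the three formulations (minimize over $x$, over $w$, or over both) is then immediate from the invertibility of the change of variables.
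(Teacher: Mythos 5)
Your proof is correct and follows essentially the same route as the paper's: use independence to factor the density, apply Bayes' rule so the marginal of the measurements becomes an irrelevant constant, and minimize the negative logarithm of the resulting log-concave posterior. The only cosmetic difference is that you justify the factorization by an explicit unit-Jacobian change of variables from the primitive noises $(w,v)$ to $(x,z)$, whereas the paper factors the prior via the Markov property $p_{\vec{X}}(x_0,\dots,x_T)=p_{\vec{X}_0}(x_0)\prod_t p_{\vec{X}_t}(x_t\mid x_{t-1})$; your version is, if anything, slightly more explicit about why $p_{\vec{X}_t}(x_t\mid x_{t-1})=e^{-f_t(x_t-F_{t-1}x_{t-1})}$, and your remark on positivity of the marginal at the observed $z$ is a reasonable extra care the paper omits.
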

\begin{proof}
In maximum a posteriori estimation, we seek to maximize the density
$$p_{\vec{X}}(\vec{X}_{0},...,\vec{X}_{T} |
(\vec{Z}_{0},...,\vec{Z}_{T}) = (z_{0},...,z_{n})).$$
By Bayes' Theorem
$$p_{\vec{X}}((\vec{X}_{0},...,\vec{X}_{T})=(x_{0},...,x_{T})|
(\vec{Z}_{0},...,\vec{Z}_{T})=(z_{0},...,z_{T})) $$
$$= \frac{p_{\vec{Z}}((\vec{Z}_{0},...,\vec{Z}_{T})=(z_{0},...,z_{T})|
(\vec{X}_{0},...,\vec{X}_{T})=(x_{0},...,x_{T})) 
p_{\vec{X}}(\vec{X}_{0},...,\vec{X}_{T})=(x_{0},...,x_{T}))}{p_{\vec{Z}}((\vec{Z}_{0},...,\vec{Z}_{T})=(z_{0},...,z_{T}))}.$$
By the independence of measurement noise,
$$p_{\vec{Z}}((\vec{Z}_{0},...,\vec{Z}_{T})=(z_{0},...,z_{T})|
(\vec{X}_{0},...,\vec{X}_{T})=(x_{0},...,x_{T})) = \prod_{t=0}^{T}
p_{\vec{V}_{t}}(z_{t} - H_{t} x_{t}).$$
Furthermore, since the process is Markov (by independence of dynamic
noise)
$$p_{\vec{X}}((\vec{X}_{0},..., \vec{X}_{T})=(x_{0},...,x_{T}))
= p_{\vec{X}_0}(x_{0}) \cdot p_{\vec{X}_{1}}(x_{1}|x_{0}) \cdot ...
\cdot  p_{\vec{X}_{T}}(x_{T}|x_{T-1}).$$
Our posterior becomes
$$\frac{\prod_{t=0}^{T} p_{\vec{V}_{t}}(z_{t}-H_{t} x_{t}) \cdot
p_{\vec{X}_{0}}(x_{0}) \cdot
\prod_{t=1}^{T} p_{\vec{X}_{t}}(x_{t} |
x_{t-1})}{p_{\vec{Z}}((\vec{Z}_{0},...,\vec{Z}_{T})=(z_{0},...,z_{T}))}.$$
By the assumptions on the distributions of $\vec{V}_{t}$ and
$\vec{W}_{t}$, this is
$$C(z_{0},...,z_{T}) \cdot \exp \left\{- f_{0}(x_{0}) 
- \sum_{t=0}^{T-1} f_{t+1}(x_{t+1} - F_{t}(x_{t}))
 -\sum_{t=0}^{T} g_{t}(z_{t} - H_{t} x_{t}) \right \}$$
where $C(z_{0},...,z_{n})$ is some term not depending on
$(x_{0},...,x_{T})$. Maximizing this expression in $(x_{0},...,x_{T})$
is then equivalent to minimizing
$$f_{0}(x_{0}) +
\sum_{t=0}^{t-1} f_{t+1}(x_{t+1} - F_{t}(x_{t}))+ \sum_{t=0}^{T} g_{t}(z_{t}-
H_{t} (x_{t})).$$
This gives us the problem in the statement of the proposition. The
different formulations follow because each choice of $(x_{0},,,x_{T})$
generates a unique $(w_{0},...,w_{T})$, according to the dynamics, and
vice versa.
\end{proof}

The extension from the Gaussian noise to log-concave random vectors is signficant.
The fact that the functions $f_{t}$ and $g_{t}$ in \ref{prob:EstConv} can take the
value $\infty$ permits a choice of densities which do
not have full support. Correspondingly, the MAP problem then becomes
one of traditional convex optimization \cite{ConvAnal},
\cite{VarAnal}, where constraints are built in to the objective
function by allowing that function to take infinite values.

The next lemma provides information about the function $f$ used to
define a log-concave density.

\begin{lemma} \label{Lemma}
Assume that $f:\mathbb{R}^{n} \to \overline{\mathbb{R}}$ is a convex function which
defines the density of a random variable $\vec{X} \sim e^{-f(x)}$.
Then
\begin{enumerate}[label=(\alph*)]
\item  If $\cl(f)$ is the lower-semicontinuous hull of $f$, then
$e^{\cl(f)}$ is also a density function for $\vec{X}$.
\item $f$ is proper
\item $dom(f)$ is full-dimensional
\item $f$ is level-bounded, so that the minimum of $f$ over
$\mathbb{R}^{n}$ is attained.
\end{enumerate}
\end{lemma}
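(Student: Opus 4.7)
The plan is to prove the four claims in the order (c), (b), (a), (d), since each builds on the previous ones. Throughout, the key mechanism is that $\int e^{-f} = 1$ places strong restrictions on $f$: neither too small (or $e^{-f}$ is not integrable) nor too large on too big a set (or it integrates to $0$).

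For (c), if $\dom(f)$ were contained in a proper affine subspace of $\R^n$ it would have Lebesgue measure zero, so $e^{-f}=0$ a.e.\ and $\int e^{-f}=0$, a contradiction. For (b), $f$ cannot be identically $+\infty$ for the same reason. To exclude $f(x_0)=-\infty$, recall the standard convex-analytic fact that a convex function taking the value $-\infty$ at one point must equal $-\infty$ throughout the relative interior of its domain; by (c) this relative interior is actually $\setint \dom(f)$ and has positive Lebesgue measure, so $e^{-f}=+\infty$ on a set of positive measure, again contradicting integrability.

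For (a), the proper convex function $f$ has full-dimensional domain (by (b) and (c)) and is therefore continuous on $\setint \dom(f)$; hence $f$ and $\cl(f)$ agree there and can differ only on $\partial \dom(f)$, which is the boundary of a convex subset of $\R^n$ and therefore has Lebesgue measure zero. Thus $e^{-f}$ and $e^{-\cl(f)}$ coincide almost everywhere, both integrate to $1$, and both serve as densities of $\vec{X}$.

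For (d), invoke (a) to replace $f$ by $\cl(f)$, a proper lsc convex density. Suppose, for contradiction, that some sublevel set $S_\alpha = \{x : \cl(f)(x) \leq \alpha\}$ is unbounded. Being closed, convex, and unbounded, $S_\alpha$ has a nonzero recession direction $v$, and for proper lsc convex functions this gives $\cl(f)(y + tv) \leq \cl(f)(y)$ for every $y$ and every $t \geq 0$. Pick any $y_0 \in \setint \dom(\cl(f))$ together with a radius $r>0$ such that $B(y_0,r) \subset \dom(\cl(f))$; continuity of $\cl(f)$ on the interior of its domain gives an upper bound $\cl(f) \leq M < \infty$ on $B(y_0,r)$. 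Then on the half-tube $T = B(y_0,r) + [0,\infty)v$ we have $\cl(f) \leq M$, so $\int_T e^{-\cl(f)} \geq e^{-M}\,|T| = +\infty$, contradicting $\int e^{-\cl(f)} = 1$. Level-boundedness of $f$ then follows from the inclusion $\{f\leq \alpha\} \subset \{\cl(f)\leq \alpha\}$, and the minimum of $\cl(f)$ is attained by taking a nested intersection of nonempty, closed, bounded sublevel sets in $\R^n$. The main obstacle is this last step: one really does need the recession-cone identity to propagate the bound on $\cl(f)$ along the direction $v$, since without it an unbounded sublevel set need not yield an infinite-measure region on which $e^{-\cl(f)}$ is bounded below.
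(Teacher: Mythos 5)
Your proof is correct, and for parts (a) and (c) it is essentially the paper's argument (boundary of a convex set is null; a lower-dimensional domain is null). The differences worth noting are in (b) and (d). For (b) the paper first passes to $\cl(f)$ via (a) and then cites the fact that an improper lsc convex function has no finite values; you instead argue directly that a value of $-\infty$ propagates to all of $\mathrm{ri}(\dom f)$, which by (c) has positive measure --- the same underlying fact, but your ordering (c), (b), (a), (d) also quietly repairs a small circularity in the paper, since continuity of a convex function on the interior of its domain is usually stated for \emph{proper} convex functions, and the paper invokes it in (a) before establishing properness in (b). The substantive divergence is (d): the paper simply asserts that $\int e^{-f}=1$ forces $f(x)\to\infty$ as $\abs{x}\to\infty$ and cites \cite{VarAnal} for attainment, whereas you actually prove the coercivity via the recession-cone identity, showing that an unbounded sublevel set yields a half-tube of infinite measure on which $e^{-\cl(f)}$ is bounded below by $e^{-M}$. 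You are right that this step is where the real work lies --- the implication is genuinely a consequence of convexity (it fails for general measurable $f$), so the recession argument, or something equivalent, is needed; your write-up supplies the detail the paper leaves implicit. The final attainment step via nested compact sublevel sets is a standard replacement for the paper's citation of \cite{VarAnal}[Thm 1.9] and is fine.
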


\begin{proof}
First we prove (a). Since
a convex function is continuous on the interior of its domain
\cite{ConvAnal}[10.1], the only points where $f$ may
fail to be lower semicontinuous is on the boundary of its domain. The
domain of a convex function is obviously convex, and since the
boundary of a convex set has Lebesgue measure zero
\cite{jarosz2015function}[Lemma 1.8.1], $\cl(f)$ and $f$ are equal
almost everywhere. Hence $e^{\cl(f)}$ is also a density function for
$X$, since it differs from the given density on a set of measure zero.
This results allows us to refer to pointwise values of $f$, by which
we mean the values of the unique lower-semicontinuous extension
$\cl(f)$.

(b) follows from (a) and the fact that $\int e^{-f(x)} dx =1$.
Because an improper lower-semicontinuous convex function can have no finite
values \cite{ConvAnal}[Cor 7.2.1], $f$ must be proper in order for
$e^{-f(x)}$ to integrate to one.

For (c), if $dom(f)$ were not full dimensional then it is a subset of
a proper affine subspace of $\mathbb{R}^{n}$. This set has measure
zero, which violates the condition that $e^{-f(x)}$ integrates to one.

Lastly, we prove (d). In order that $\int e^{-f(x)} dx = 1$, we must
have $f(x) \to \infty$ as $\abs{x} \to \infty$. This means that $f$ is
level bounded, which combined with the fact that we can without loss
of generality take $f$ to be lower-semicontinuous, gives that $f$
attains its minimum \cite{VarAnal}[Thm 1.9]
\end{proof}

To simplify calculations in the results that follow, we will rewrite the
problem \ref{prob:EstConv} in a more compact form. We borrow from Rockafellar \cite{MultStochDual} the
notion of a supervector, which is simply a concatenated vector consisting of a
variable at all time steps. Let $w=(w_{0},...,w_{T})'$,
$z=(z_{0},...,z_{T})'$, $x=(x_{0},...,x_{T})'$ be the supervectors
corresponding MAP estimates of the dynamical noise, measurements, and states,
respectively. Define 
$$f(w)=\sum_{t=0}^{T} f_{t}(w_{t}),$$
$$g(z) = \sum_{t=0}^{T} g_{t}(z_{t}).$$
Note that each of these functions is separable with respect to the
components of their respective supervectors. Hence infimums and
supremums can be performed with respect to each component.

Define 

$$A = \left( \begin{array}{cccc} 1 & 0 & \cdots & 0 \\ -F_{0} & 1 &
\cdots & 0 \\ 0 & \ddots & \ddots & 0 \\ 0 & \cdots & -F_{T-1} & 1
\end{array} \right)$$
so that the dynamical system constraint in \ref{prob:EstConv} can be
represented as 
\begin{equation}\label{dynconst}
Ax - w = 0.
\end{equation}
Similarly, let 
$$H = \left( \begin{array}{cccc} H_{0} & 0 & \cdots & 0 \\ 0 & H_{1} &
\cdots & \\ \vdots & \vdots & \ddots & \vdots \\ 0 & 0 & \cdots &
H_{T} \end{array} \right).$$
Then the measurement constraint can be rewritten in supervector
notation as well, allowing us to rewrite problem \ref{prob:EstConv} 
\begin{align} \label{Supervector}
\min_{x, w} & \quad f(w) + g(z-Hx) \\
\text{s.t. } & Ax - w = 0.\nonumber
\end{align}

We now turn our attention to convex-analytic duality. For
concreteness, we assume that $C \subseteq \mathbb{R}^n$ and $D \subseteq
\mathbb{R}^m$. Recall that a convex problem
$\min_{x \in C} h(x)$ is \emph{dual} to a concave problem $\max_{y
\in D} k(y)$ if there is a convex-concave function $L: \mathbb{R}^{n}
\times \mathbb{R}^m \to \overline{\mathbb{R}}$ such that
$$h(x) = \sup_{y \in D} L(x;y) \;\;\; \text{ and } \;\;\; k(y) =
\inf_{x \in C} L(x;y).$$
This definition, from \cite{ConjDual}, is equivalent to the notion of
duality in which one perturbs constraints in order to
generate a saddle function $L$. Indeed, a perturbation function $F$ can be
generated from the equation 
$$F(x,u) = \sup_{y \in D} \left\{ K(x,y) - u'y \right\}.$$
Of course, this duality framework subsumes the familar Lagrangian
duality, Fenchel duality, and various other duality schemes.
We refer the reader to \cite{ConjDual}
for details and examples, and focus on applying
this theory to the problem at hand.

\begin{theorem}
When $f$  and $g$ are convex, problem
\eqref{prob:EstConv} 
is the primal problem associated with the saddle function 
$$L(w, x; u, y) = f(w) + z'u - g^{*} (u) - u'Hx + y'(Ax - w)$$
on $C:= \R^{(T+1) \times n_{x}} \times \R^{(T+1)\times n_{x}}$,
$D:= \R^{(T+1)\times n_{z}} \times \R^{(T+1) \times n_{x}}$
\end{theorem}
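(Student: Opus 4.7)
The plan is to verify the two conditions in the paper's definition of duality applied to this candidate $L$: first that $L$ is convex-concave on $C \times D$, and second that $\sup_{(u,y) \in D} L(w,x;u,y)$ reproduces the objective and constraints of \eqref{prob:EstConv} in its supervector form \eqref{Supervector}.

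For convex-concavity, I would split $L$ into pieces according to the two groups of variables. As a function of $(w,x)$ with $(u,y)$ fixed, $L$ is the sum of $f(w)$, which is convex by hypothesis, plus the affine terms $-u'Hx + y'(Ax - w)$, plus the constant $z'u - g^*(u)$. As a function of $(u,y)$ with $(w,x)$ fixed, $L$ is the sum of $-g^*(u)$, which is concave because $g^*$ is convex as a Legendre conjugate, plus the affine terms $z'u - u'Hx + y'(Ax - w)$, plus the constant $f(w)$. Both sides are therefore immediate.

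Next I would compute the pointwise supremum of $L$ over $(u,y) \in D$. Because $u$ and $y$ decouple, the supremum splits as
\begin{equation*}
\sup_{u,y} L(w,x;u,y) = f(w) + \sup_{u}\bigl\{\langle u, z - Hx\rangle - g^*(u)\bigr\} + \sup_{y}\langle y, Ax - w\rangle.
\end{equation*}
The first inner supremum is $g^{**}(z - Hx)$ by definition of the conjugate, and the second is the indicator of the linear subspace $\{Ax - w = 0\}$, equal to $0$ when the dynamics are satisfied and $+\infty$ otherwise. Collecting these gives $f(w) + g^{**}(z - Hx) + \delta_{\{Ax - w = 0\}}(w,x)$.

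The one step that needs justification rather than mere unpacking is the identification $g^{**} = g$, which by the Fenchel--Moreau theorem requires $g$ to be proper, convex, and lower semicontinuous. This is where Lemma \ref{Lemma} does the work: parts (a) and (b) give each $g_t$ the first two properties (after replacing it with its lsc hull, which does not alter the density), and the sum $g = \sum_{t=0}^{T} g_t$ inherits properness, convexity, and lower semicontinuity from the finite sum of such functions. With $g^{**} = g$ in hand, the computed supremum is exactly the objective of \eqref{Supervector} extended by the constraint $Ax = w$, so minimizing it in $(w,x)$ over $C$ recovers \eqref{prob:EstConv}, which is what needs to be shown.
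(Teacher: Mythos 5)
Your proposal is correct and follows essentially the same route as the paper: split the supremum over $(u,y)$ into the two decoupled pieces, recognize the $y$-supremum as the indicator of the constraint $Ax - w = 0$, and invoke Lemma \ref{Lemma} together with the Fenchel--Moreau theorem to identify $g^{**}$ with $g$. The only addition is your explicit check of convex-concavity of $L$, which the paper leaves implicit.
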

\begin{proof}
To prove that \eqref{prob:EstConv} is the primal problem for the
saddle-function $L$, we will show that 
$$\eqref{prob:EstConv} = \min_{(w,x) \in C} \sup_{(u, y) \in D} L(w,x;u,y)$$
which satisfies the definition in \cite{ConjDual}. For ease of
notation, in what follows we omit the sets over which we take the infimums and
supremums.
By the separability of $L$, 
$$\sup_{u,y} L(w,x; u,y) = f(w) + \sup_{u}\left\{ u'(z-Hx) - g^{*}(u)
\right\}  + \sup_{y}\left\{ y'(Ax-w) \right\}$$
Obviously the right-most supremum is $0$ when $Ax -w=0$ and $\infty$
otherwise. Furthermore, Lemma \ref{Lemma} gives that without loss of
generality $g$ is proper and lsc. Therefore the 
Fenchel-Moreau Theorem \cite[Th 11.1]{VarAnal} gives that
$$\sup_{u} \left\{ u'(z-Hx) - g^{*}(u) \right\} = g^{**}(z-Hx) =
g(z-Hx).$$
Thus
\begin{align*}
&\min_{w,x} \sup_{u,y} L(w,x;u,y) \\
= \min_{w,x} & f(w) + g(z-Hx) \\
\text{s.t. } & Ax-w = 0
\end{align*}
\end{proof}

\begin{theorem} \label{SupervectorDual}
The dual problem associated with $L$ on $\R^{(T+1) \times n_{x}} \times
\R^{(T+1) \times n_{x}}$, $\R^{(T+1) \times n_{z}} \times \R^{(T+1) \times n_{x}}$ is
\begin{align}\label{ContProb}
\sup_{y, u} \;& z'u-f^{*}(y) - g^{*}(u) \\
\text{s.t. } & A'y - H'u = 0 \nonumber
\end{align}
Furthermore, this problem has an equivalent reduced formulation where
the supremum is taken over $u$.
\end{theorem}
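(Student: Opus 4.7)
The plan is to compute $\inf_{(w,x) \in C} L(w,x;u,y)$ explicitly, since by the definition of the dual problem associated with the saddle function $L$ (the one used in the preceding theorem) this infimum, maximized over $(u,y) \in D$, is the dual. The key observation is that $L$ decouples nicely in $w$ and $x$. Rewriting
\begin{equation*}
L(w,x;u,y) = \bigl[f(w) - \langle y, w\rangle\bigr] + \langle A'y - H'u, x\rangle + z'u - g^{*}(u),
\end{equation*}
the minimization in $w$ and in $x$ can be done independently.

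For the $w$-part, by the definition of the convex conjugate,
$\inf_{w} \{f(w) - \langle y, w\rangle\} = -f^{*}(y)$. For the $x$-part, the expression is linear in $x$, so its infimum over $\R^{(T+1)\times n_x}$ equals $0$ if $A'y - H'u = 0$ and $-\infty$ otherwise. Collecting these pieces, I would conclude that
\begin{equation*}
\inf_{(w,x)} L(w,x;u,y) = \begin{cases} z'u - f^{*}(y) - g^{*}(u) & \text{if } A'y = H'u, \\ -\infty & \text{otherwise}, \end{cases}
\end{equation*}
and taking the supremum over $(u,y)$ produces exactly problem \eqref{ContProb}.

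For the reduced formulation, I would note that $A$ is block lower triangular with identity blocks on its diagonal, hence invertible, and so is $A'$. Given any $u$, the constraint $A'y = H'u$ therefore determines $y$ uniquely as $y = (A')^{-1} H' u$. Substituting this into the objective eliminates $y$ and produces a problem solely in $u$:
\begin{equation*}
\sup_{u} \;\bigl\{ z'u - f^{*}\bigl((A')^{-1} H' u\bigr) - g^{*}(u) \bigr\}.
\end{equation*}
The main subtlety (such as it is) is being careful with the sign conventions when identifying $\inf_w \{f(w) - \langle y, w\rangle\}$ with $-f^*(y)$ and making sure the ``infeasible'' case of the linear infimum in $x$ is handled so that it disappears correctly after taking the outer supremum; everything else is bookkeeping that uses the block structure of $A$.
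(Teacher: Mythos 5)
Your proof is correct and follows essentially the same route as the paper: decompose $L$, use $\inf_w\{f(w)-\langle y,w\rangle\}=-f^*(y)$, and read the constraint $A'y=H'u$ off the linear infimum in $x$. Your justification of the reduced formulation via invertibility of $A'$ is just a more explicit version of the paper's remark that each $u$ determines a unique $y$ through the constraints.
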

\begin{proof}
The dual
problem associated with the triple $L$, $\R^{(T+1) \times n_{x}} \times
\R^{(T+1) \times n_{x}}$, $\R^{(T+1) \times n_{z}} \times \R^{(T+1)
\times n_{x}}$ is
\begin{align*}
&\sup_{u, y} \inf_{w,x} L(w,x;v,y) \\
= &\sup_{u,y} \inf_{w,x} f(w) + z'u - g^{*}(u) - u'Hx + y'(Ax-w) \\
= & \sup_{u,y} \inf_{w,x} f(w) -w'y + x'(A'y-H'u) + z'u - g^{*}(u) \\
= & \sup_{u,y} z'u - g^{*}(u) + \inf_{w} \left\{ f(w) - w'y \right\} +
\inf_{x} \{ x'(A'y-H'u) \}\\
= \sup_{u,y} & z'u - g^{*}(u) -f^{*}(y) \\
\text{s.t. } & A'y-H'u=0 \\
\end{align*}
Lastly, the equivalent reduced formulation follows because the each
$u$ generates a unique $y$ vector according to the constraints.
\end{proof}
Appealing to the separability of $f$ and $g$, and expanding the
matrices $A'$ and $H'$, we have established the following main result.
\begin{theorem} \label{MainTheorem}
The dual problem associated with the estimation problem
\eqref{prob:EstConv} is the optimal control problem
\begin{align}\label{DualConvProbExp} \tag{$\mathcal{D}$}
\sup_{u_{0},...,u_{T}} & \sum_{t=0}^{T} f_{t}^{*}(y_{t}) + g_{t}^{*}(u_{t}) -
z_{t}' u_{t} \\
\text{s.t. } & y_{t} = F_{t}' y_{t+1} + H'_{t} u_{t}, \; \;
t=0,...,T-1 \nonumber \\
& y_{T} = H' u_{T} \nonumber 
\end{align}
\end{theorem}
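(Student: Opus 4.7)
The plan is to specialize Theorem \ref{SupervectorDual} to the time-indexed form by unpacking the supervector notation introduced before \eqref{Supervector}. Only two ingredients are needed: the separability of the conjugates $f^*$ and $g^*$, and an explicit block expansion of the linear constraint $A'y - H'u = 0$. No new analytic content is required beyond what Theorem \ref{SupervectorDual} already provides.

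First I would handle the objective. Because $f(w) = \sum_{t=0}^{T} f_t(w_t)$ decomposes as a sum over disjoint blocks of the supervector $w$, the definition of the conjugate factors across those blocks, giving $f^*(y) = \sum_{t=0}^{T} f_t^*(y_t)$; this is immediate since the supremum defining $y'w - f(w)$ splits independently over the $w_t$. The identical argument for $g$ yields $g^*(u) = \sum_{t=0}^{T} g_t^*(u_t)$, and of course $z'u = \sum_{t=0}^{T} z_t' u_t$. Substituting these three decompositions into the objective of Theorem \ref{SupervectorDual} reproduces the per-time-step sum appearing in \eqref{DualConvProbExp}.

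Next I would read off the constraint $A'y = H'u$ from the block structure of $A$ and $H$. Since $H$ is block diagonal with blocks $H_t$, the $t$-th block of $H'u$ is $H_t' u_t$. The matrix $A$ is lower block-bidiagonal with identity on the diagonal and $-F_t$ just below it, so its transpose $A'$ is upper block-bidiagonal with identity on the diagonal and $-F_t'$ just above. Equating blocks yields $y_t - F_t' y_{t+1} = H_t' u_t$ for $t = 0, \dots, T-1$, together with the terminal equation $y_T = H_T' u_T$; rearranging gives the backward recursion stated in \eqref{DualConvProbExp}. The reduced formulation over $u$ alone then follows because the terminal condition together with the recursion determines $y$ uniquely from $u$. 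There is no genuine obstacle in the argument — it is essentially bookkeeping — but one needs to be careful about the transposition of the block bidiagonal $A$ and about the boundary term at $t = T$ coming from the last row/column.
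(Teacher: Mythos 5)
Your proposal is correct and follows essentially the same route as the paper, which derives Theorem \ref{MainTheorem} from Theorem \ref{SupervectorDual} by ``appealing to the separability of $f$ and $g$, and expanding the matrices $A'$ and $H'$''; your block-by-block expansion of $A'y - H'u = 0$ and the componentwise factorization of the conjugates is exactly that argument, carried out explicitly. The only additional detail worth noting is that the paper's terminal condition should read $y_T = H_T' u_T$, which your derivation correctly produces.
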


The next theorem provides a condition, known as a constraint
qualification, for strong duality to hold between the estimation
problem \ref{prob:EstConv} and control \ref{ContProb} problems above.

\begin{theorem}\cite[Theorem 28.2]{ConvAnal} \label{StrongDual}
 Assume that the problem \ref{prob:EstConv} is strictly
feasible, so that there exists a pair $(x,w)$ satisfying
\eqref{dynconst}, $w \in \text{int}(\text{dom}(f))$, $z-Hx \in
\text{int}(\text{dom}(g))$. Then a strong duality relationship exists between the problems
\ref{prob:EstConv} and \ref{DualConvProbExp}. In other words, the supremum in
\ref{DualConvProbExp} equals the optimal
value in \ref{prob:EstConv}. Furthermore, this supremum is attained. 
\end{theorem}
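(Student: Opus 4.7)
The plan is to reduce the statement to Rockafellar's Theorem 28.2 in \cite{ConvAnal}, which is precisely the cited result. Since Theorem \ref{SupervectorDual} has already identified \eqref{DualConvProbExp} as the Lagrangian dual associated with the saddle function $L$ for the primal \eqref{Supervector}, the only remaining task is to verify that the constraint qualification demanded by Theorem 28.2 follows from the strict feasibility hypothesis stated here.

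First, I would cast \eqref{Supervector} in Rockafellar's standard form for an ordinary convex program. The composite objective $h(x,w) := f(w) + g(z - Hx)$ is a proper convex function by Lemma \ref{Lemma}(b) applied component-wise, together with the fact that precomposition with the affine map $x \mapsto z - Hx$ preserves properness and convexity. The only constraint, $Ax - w = 0$, is a single affine system, so in Rockafellar's formulation it is treated among the equality (affine) constraints, for which no strict slack is required.

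Next, I would verify the constraint qualification. When the only explicit constraints in an ordinary convex program are affine, Theorem 28.2 reduces to the requirement that the feasible set meet $\mathrm{ri}(\dom h)$. By Lemma \ref{Lemma}(c), each $\dom(f_t)$ and each $\dom(g_t)$ is full-dimensional, so $\dom f$ and $\dom g$ are full-dimensional in their respective supervector spaces; consequently the relative interior coincides with the interior for both, and a pair $(x,w)$ lies in $\setint(\dom(h))$ precisely when $w \in \setint(\dom(f))$ and $z - Hx \in \setint(\dom(g))$. The strict feasibility hypothesis supplies such a pair which additionally satisfies $Ax - w = 0$, so the qualification is met.

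Theorem 28.2 then produces a Kuhn--Tucker vector $(y,u)$ for \eqref{Supervector}, which by the saddle-point characterization of Kuhn--Tucker vectors is equivalent to the assertion that the Lagrangian dual supremum is attained and equals the primal infimum. Since Theorem \ref{SupervectorDual} identifies this Lagrangian dual with \eqref{DualConvProbExp}, the strong duality statement follows. The main subtlety I anticipate is cleanly separating the affine equality constraint (which needs no Slater-type slack) from the implicit domain constraints of $f$ and $g$ (which require the interior conditions in the hypothesis); Lemma \ref{Lemma}(c) is what makes this translation transparent by ensuring that the relative interior and interior of each relevant effective domain coincide.
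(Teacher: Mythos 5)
Your proposal is correct and follows essentially the same route as the paper: both reduce the claim to Rockafellar's Theorem 28.2 and use Lemma \ref{Lemma}(c) (full-dimensionality of $\dom f$ and $\dom g$) to identify the interior condition in the hypothesis with the relative-interior constraint qualification required by that theorem. Your version simply spells out more of the bookkeeping (the affine equality constraint needing no slack, the Kuhn--Tucker vector interpretation) that the paper leaves implicit.
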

\begin{proof}
This follows directly from the strong duality theorem 
in \cite[Theorem 28.2]{ConvAnal}. Note that the typical formulation of
this constraint qualification requires only a \emph{relative interior}
point, when the domains are considered as subsets of their affine
hulls, instead of an interior point. However, since by Lemma
\ref{Lemma} the domains of $f$ and $g$ are full-dimensional, the
notions are equivalent.
\end{proof}

section{The Piecewise Linear Quadratic Case}

In this section we investigate structural constraints on the functions
$f_{t}$ and $g_{t}$ in the densities of $\vec{W}_{t}$ and $\vec{V}_{t}$ 
that allow us to remove the constraint qualification condition in
\ref{StrongDual}. 

Recall that a function is linear-quadratic if it is polynomial of
degree at most two, so that constant and linear functions are included
in this family. 
\begin{theorem}\label{theorem:PLQ}
 Assume that $f_{t}$ and $g_{t}$ are convex and piecewise 
linear-quadratic. If either \ref{prob:EstConv} or \ref{DualConvProbExp} are
feasible, then strong duality holds between these estimation and
optimal contorl problems, so that
their optimal objective values are equal. Furthermore, both problems
attain their optimal objective values. 
\end{theorem}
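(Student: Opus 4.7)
The plan is to bypass the strict-feasibility hypothesis of Theorem \ref{StrongDual} by invoking the sharper form of Fenchel-Rockafellar duality that is available when all functions involved are piecewise linear-quadratic. In that setting the effective domains of $f_t$ and $g_t$ are polyhedral and, crucially, the classical polyhedral duality result (Rockafellar, \emph{Convex Analysis}, Corollary 31.2.1), extended to convex PLQ objectives by the extended linear-quadratic programming duality theorem in Rockafellar--Wets, asserts strong duality with attainment on both sides under feasibility alone, with no relative-interior requirement.

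To set up the application I would first observe that $f(w) = \sum_t f_t(w_t)$ and $g(v) = \sum_t g_t(v_t)$ are themselves convex PLQ on the corresponding supervector spaces, since PLQ structure is preserved under direct sums over disjoint coordinate blocks. The class of convex PLQ functions is closed under conjugation, so $f^*$ and $g^*$ are convex PLQ as well, with polyhedral effective domains. Eliminating $w$ via $w = Ax$ recasts \ref{prob:EstConv} as the Fenchel problem
\begin{equation*}
\min_{x}\; f(Ax) \;+\; g(z - Hx),
\end{equation*}
whose Fenchel-Rockafellar dual is precisely \ref{DualConvProbExp} by Theorem \ref{SupervectorDual}. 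Both problems therefore fall squarely into the PLQ framework, as the only non-PLQ ingredients are the linear operators $A$ and $H$ and the translation by $z$.

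The key step is then to invoke the PLQ duality theorem on this pair. It yields at once that if either side is feasible, both are feasible, their optimal values are equal, and both extrema are attained; the polyhedral-plus-quadratic structure effectively absorbs the role of a relative-interior point. The main obstacle is mostly bookkeeping: verifying that each operation used in passing from $f_t, g_t$ to the composite primal, and then through the linear operators to the dual, preserves the PLQ class. Once that is checked, the attainment part of the theorem automatically rules out any unboundedness pathology, since an infinite infimum cannot be ``attained,'' so feasibility of one side combined with the theorem forces both optimal values to be finite and achieved. The symmetric roles of $f$ and $g$ versus $f^*$ and $g^*$ handle the two feasibility cases uniformly.
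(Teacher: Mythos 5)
Your proposal is correct and follows essentially the same route as the paper: both reduce the matter to observing that the primal--dual pair is an extended (piecewise) linear-quadratic program and then invoke the Rockafellar--Wets duality theorem for that class (\cite{VarAnal}[Thm 11.42]), which supplies equality of optimal values and attainment on both sides from feasibility alone, with no interior-point constraint qualification. The only cosmetic differences are that you eliminate $w$ to phrase the pair in Fenchel form while the paper keeps the constraint $Ax-w=0$ from \eqref{Supervector}, and you spell out the closure of the PLQ class under sums and conjugation, which the paper leaves implicit.
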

\begin{proof} 
If $f_{t}$ and $g_{t}$ are piecewise linear quadratic, then the reformulated
problem \ref{Supervector} is a piecewise linear-quadratic program.
Lemma \ref{Lemma} gives us that each of $f_{t}$ and $g_{t}$ are
proper, and hence each of their conjugates is as well. Combined with the
assumption that one of \ref{prob:EstConv} and \ref{DualConvProbExp} is
feasible, we know that the optimal objective value of this problem is finite. 
Strong duality and the attaining of optimal values then follow directly 
from \cite{VarAnal}[Thm 11.42]
\end{proof}

Theorem \ref{theorem:PLQ} removes the contraint qualification of
Theorem \ref{MainTheorem} by imposing extra structure on the
functions $f_{t}$ and $g_{t}$. By assuming that $f_{t}$ and $g_{t}$
are piecewise linear-quadratic, strong duality becomes automatic. 

When $f_{t}$ and $g_{t}$ are arbitrary convex functions, computing
closed form expressions for the conjugates that occur in the dual
problem may be difficult. 
The conjugate function, Lagrangian, and dual problem
\ref{DualConvProbExp} are especially easy to compute in the
special case that $f_{t}$ and $g_{t}$ are monitoring functions, which
includes many problems of practical interest.

A \emph{monitoring function} is a function $\rho_{U,M} :\mathbb{R}^{n} \to
\overline{\mathbb{R}}$ defined by
$$\rho_{U, M}(x) = \sup_{u \in U}\{ x'u-\frac{1}{2} u' M u \}$$
where $U \subseteq \mathbb{R}^{n}$ is a nonempty polyhedral set and
$M$ is an $n \times n$ positive semidefinite matrix. 

Monitoring functions are flexible tools for modeling penalties. They
are proper, convex, and piecewise linear-quadratic \cite{VarAnal}[Ex
11.18], and can be used to model a variety of linear and quadratic
penalties in addition to polyhedral constraints. A probabilistic interpretation
of the use of monitoring functions in robust smoothing problems can be found
in \cite{aravkin}. The authors detail the construction of many
commonly used penalties in robust optimization, and provide remarks
for constructing others.

The incentive for considering the case that $f_{t}$ and $g_{t}$ are
monitoring functions is two-fold. The first is that, though this is
requires a further restriction on the form of $f_{t}$ and $g_{t}$, the
MAP problem Gaussian noise  is contained in this
case. The second is that the framework provides enough structure to
make the computation of the conjugates and the dual control problem
\ref{DualConvProbExp} straight-forward.

We'll be aided in this by the following lemma.
\begin{lemma}
If $\rho_{U,M}$ is a monitoring function, then the conjugate
$\rho^{*}_{U, M}$ is given by
$$\rho^{*}_{U,M}(y) = \left\{ \begin{array}{cc} \frac{1}{2} y' M y &
\text{ when } y \in U \\ \infty & \text{ otherwise} \end{array}
\right.$$
\end{lemma}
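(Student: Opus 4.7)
The plan is to recognize $\rho_{U,M}$ itself as a conjugate, and then to invoke the Fenchel--Moreau theorem (which is cited as \cite[Thm 11.1]{VarAnal} earlier in the paper) to identify $\rho_{U,M}^{*}$ with the original function whose conjugate gave $\rho_{U,M}$. Concretely, I would introduce the auxiliary function
\[
q(u) = \tfrac{1}{2}\, u' M u + \delta_{U}(u),
\]
where $\delta_{U}$ is the indicator function of $U$ (equal to $0$ on $U$ and $+\infty$ off of $U$). With this definition the supremum appearing in the definition of $\rho_{U,M}$ is exactly $q^{*}$, so $\rho_{U,M} = q^{*}$ and hence $\rho_{U,M}^{*} = q^{**}$.

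The next step is to verify the hypotheses needed for Fenchel--Moreau to apply, namely that $q$ is proper, convex, and lower semicontinuous. Properness is immediate because $U$ is nonempty, so $q$ is finite on $U$ and never takes the value $-\infty$. Convexity follows because $u \mapsto \tfrac{1}{2}\, u'Mu$ is convex (the matrix $M$ being positive semidefinite) and $\delta_{U}$ is convex as $U$ is polyhedral, hence convex. Lower semicontinuity follows because the quadratic term is continuous and polyhedral sets are closed, making $\delta_{U}$ lsc; the sum of an lsc function and a continuous function is lsc.

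Having checked these, Fenchel--Moreau gives $q^{**} = q$, and therefore
\[
\rho_{U,M}^{*}(y) \;=\; q(y) \;=\; \tfrac{1}{2}\, y' M y + \delta_{U}(y),
\]
which is exactly the piecewise expression claimed in the lemma. There is no real obstacle here; the only thing to be careful about is the check that $q$ is lsc, which is where polyhedrality of $U$ (rather than mere convexity) is used. Since everything is piecewise linear-quadratic in nature, no constraint qualification or additional closure arguments are required.
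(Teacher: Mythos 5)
Your proof is correct, but it takes a genuinely different route from the paper's. You recognize $\rho_{U,M}$ as the conjugate of the auxiliary function $q(u)=\tfrac{1}{2}u'Mu+\delta_{U}(u)$, verify that $q$ is proper, convex, and lsc, and then invoke Fenchel--Moreau to conclude $\rho_{U,M}^{*}=q^{**}=q$. The paper instead expands the conjugate into the expression $\sup_{x}\inf_{u\in U}\left\{x'(y-u)+\tfrac{1}{2}u'Mu\right\}$, interprets it as the dual of the problem of minimizing $\tfrac{1}{2}u'Mu$ over $u\in U$ subject to $u=y$, and appeals to the piecewise linear-quadratic strong duality theorem (\cite{VarAnal}[Thm 11.42]) to swap the $\sup$ and $\inf$ --- the same device used in the paper's Theorem \ref{theorem:PLQ}, which keeps the section thematically uniform. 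Your biconjugation argument is the more elementary and self-contained of the two: it requires no minimax interchange and no PLQ machinery, only the Fenchel--Moreau theorem already cited earlier in the paper. One small refinement: polyhedrality of $U$ is not actually what your lsc check needs --- closedness of $U$ suffices, and polyhedral sets are closed; polyhedrality (together with $M\succeq 0$) is what makes $\rho_{U,M}$ piecewise linear-quadratic, which matters elsewhere in the section but not for this conjugacy computation. Your final remark that ``no constraint qualification is required'' is precisely the advantage your route makes explicit, whereas the paper's route buries that fact inside the hypotheses of the PLQ duality theorem.
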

\begin{proof}
\begin{align} \label{conj}
\rho^{*}_{U, M}(y) &= \sup_{x \in \mathbb{R}^{n}} \left\{ x' y
 - \rho_{U, M}(x) \right\} \nonumber \\
&= \sup_{x \in \mathbb{R}^{n}} \left\{ x'y - \sup_{u
\in U} \left\{ x'u - \frac{1}{2} u' M u \right\} \right\} \nonumber \\
&= \sup_{x \in \mathbb{R}^{n}} \inf_{u \in U} \left\{ x'(y-u) +
\frac{1}{2} u' M u \right\} 
\end{align}
This can be interpreted as the dual to the problem 
\begin{align*} 
\inf_{u \in U} & \frac{1}{2} u' M u  \\
\text{s.t. }&  u=y 
\end{align*}
In the same spirit as \ref{theorem:PLQ}, we apply \cite{VarAnal}[11.42]
to see that, when $U$ is nonempty, \ref{conj} equals
\begin{align*}
\inf_{u \in U} \sup_{x \in \mathbb{R}^{n}} & \left\{ x'(y-u) + \frac{1}{2} u' M u
\right\} \\
&= \left\{ \begin{array}{cc} \frac{1}{2} y' M y &
\text{ when } y \in U \\ \infty & \text{ otherwise} \end{array}
\right.
\end{align*}
\end{proof}

We've arrived at a precise formulation of the dual control
problem \ref{DualConvProbExp}.
\begin{corollary}\label{theorem:DualMonitor}
If $\vec{W}_{t}$ and $\vec{V}_{t}$ have a PLQ density
$$\vec{W}_{t} \propto e^{-\rho_{W_{t}, M_{t}}} \;\;\; \vec{V}_{t} \propto
e^{-\rho_{V_{t}, N_{t}}}$$
with $M_{t}, N_{t} \succeq 0$ and and $W_{t}$, $V_{t}$ nonempty and
polyhedral,
then the MAP problem \ref{prob:EstConv} is dual to the control problem
\begin{align}\label{DualMonitor}
\sup_{u_{0},...,u_{T}} & \sum_{t=0}^{T} \frac{1}{2} y_{t}' M_{t} y_{t} +
\frac{1}{2} u_{t}' N_{t} u_{t} - z_{t}' u_{t} \\
\text{s.t. } & y_{t} = F_{t}' y_{t+1} + H'_{t} u_{t}, \; \;
t=0,...,T-1 \nonumber \\
& y_{T} = H' u_{T} \nonumber \\
& y_{t} \in W_{t}, \;\;\; t=0,...,T \\
& u_{t} \in V_{t}, \;\;\; t=0,...,T 
\end{align}
Optimal values are attained and strong duality holds when either
problem is feasible.
\end{corollary}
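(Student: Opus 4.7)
The plan is to assemble the corollary directly from the two main ingredients already at hand: the abstract dual \eqref{DualConvProbExp} from Theorem \ref{MainTheorem}, and the explicit conjugate formula from the preceding lemma. First I would invoke Theorem \ref{MainTheorem} with $f_t = \rho_{W_t, M_t}$ and $g_t = \rho_{V_t, N_t}$, so that the dual control problem takes the form
\begin{align*}
\sup_{u_0,\ldots,u_T} & \sum_{t=0}^{T} -\rho^*_{W_t,M_t}(y_t) - \rho^*_{V_t,N_t}(u_t) - z_t' u_t \\
\text{s.t. } & y_t = F_t' y_{t+1} + H_t' u_t, \quad t=0,\ldots,T-1, \\
& y_T = H_T' u_T.
\end{align*}
(After flipping the sign convention inherited from the conjugation, which is already baked into \eqref{DualConvProbExp}.)

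Next, I would substitute the monitoring-function conjugate formula from the lemma, namely $\rho^*_{U,M}(y) = \tfrac12 y' M y$ if $y \in U$ and $\infty$ otherwise. The finite portions contribute the quadratic terms $\tfrac12 y_t' M_t y_t$ and $\tfrac12 u_t' N_t u_t$ in the objective, while the effective-domain restrictions $y_t \in W_t$ and $u_t \in V_t$ become hard constraints in the reformulated dual, producing precisely \eqref{DualMonitor}. The sign convention is handled by recalling that the objective in \eqref{DualConvProbExp} is $\sum_t z_t'u_t - f_t^*(y_t) - g_t^*(u_t)$, so the quadratic expressions pick up the correct signs after substitution.

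For the strong duality claim, I would note that each $\rho_{U,M}$ is convex and piecewise linear-quadratic \cite{VarAnal}[Ex 11.18], hence the hypotheses of Theorem \ref{theorem:PLQ} are met verbatim. Consequently, whenever either \eqref{prob:EstConv} or \eqref{DualMonitor} is feasible, strong duality holds with both optimal values attained. The only mild obstacle here is a bookkeeping one: making sure that the domain constraints encoded in the extended-valued conjugates are correctly pulled out as explicit polyhedral constraints on $y_t$ and $u_t$, and that the terminal condition $y_T = H_T' u_T$ (with no successor state) is consistent with the general recursion. Beyond that, everything follows by direct substitution from Theorem \ref{MainTheorem}, Theorem \ref{theorem:PLQ}, and the preceding lemma, so the proof is essentially a specialization rather than a new argument.
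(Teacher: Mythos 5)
Your proposal is correct and follows essentially the same route as the paper, whose proof is simply the one-line statement that the corollary follows directly from Theorem \ref{MainTheorem} and Theorem \ref{theorem:PLQ}; you have merely filled in the substitution of the monitoring-function conjugates and the extraction of their effective domains as explicit constraints, which is exactly the intended specialization.
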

\begin{proof}
Follows directly from \ref{MainTheorem} and \ref{theorem:PLQ}.
\end{proof}
As an application of Theorem \ref{theorem:DualMonitor}, we verify the strong
duality between estimation and control in the linear-quadratic
Gaussian setting. By taking $\vec{W}_{t} \sim \mathcal{N}(0,Q_{t})$ and $\vec{V}_{t} \sim
\mathcal{N}(0,R_{t})$, we can represent the density functions as in
theorem \ref{theorem:DualMonitor} by taking
$$W_{t}=\mathbb{R}^{n_{x}}, \; M_{t} = Q_{t}, \;\;\;\; V_{t} =
\mathbb{R}^{n_{z}}, N_{t} = R_{t}$$ 
Because $\rho_{t, \mathbb{R}^{n_{x}}, Q_{t}}$ and $\rho_{t,
\mathbb{R}^{n_{z}}, R_{t}}$ have domains $\mathbb{R}^{n_{x}}$ and
$\mathbb{R}_{n_{z}}$, respectively, the MAP problem is feasible for
any measurements $(z_{0},...,z_{T})$. Strong duality then follows
directly from Theorem \ref{theorem:DualMonitor}.

Moreover, by taking $\vec{W}_{t} \sim \mathcal{N}(0, Q_{t})$ and
$\vec{V}_{t} \equiv 0$, we recover the duality of the two problems
considered by Kalman from the introduction. The time reversal is in fact an
artifact of taking the convex analytic dual, though recovery of the
Riccati-covariance propagation equivalence requires considering each problem as sequential,
which is not the appoach that we've taken here.

\section{Applications: Reconstructing an Estimator from Optimal Controls}
In this section we apply the results of the previous sections to
construct the solution to an optimal estimation problem from the solution to its dual problem of
optimal control. We focus on a nonsmooth problem of practical
interest. First, we formulate an
estimation problem where the density of the measurement noise is generated via
log-concave maximum likelihood estimation from a sample of measurement noise. We
then use the result \ref{MainTheorem} to construct a corresponding
dual control problem. Finally, we use the solution to this control problem
to construct an optimal estimator for the original problem.

The set up for this problem motivated by the following scenario. A
practioner aims to estimate the current and previous states of a
dynamical system from a set of
noisy measurements. Through calibration of a sensor or observation, the practioner
has the ability to generate sample data for the measurement noise. How
can one use this data to formulate an estimation problem which reflects
the tendencies of the sensor? We would like to capitalize on the
ability to generate measurement noise, instead of defaulting to a
Gaussian noise assumption.

The following theorem, due to \cite{rufibach} and \cite{pal}, as well as the
computational results in \cite{CRANPackage}, allow us to form a
nonparametric density estimate of a log-concave density based on a set
of sample data.
\begin{theorem}
If $X_1,...,X_n$ are i.i.d. observations from a univariate log-concave
density, then the nonparamtric MLE exists, is unique, and is of the
form $\hat{\phi}_{n} = \exp \hat{f_{n}}$. The function $\hat{f}_{n}$
is
piecewise linear on $[X_{(1)}, X_{(n)}]$, with the set of knots
contained in $\{ X_{1},...,X_{n} \}$. Outside of $[X_{(1)}, X_{(n)}]$,
$\hat{f}$ takes the value $\infty$.
\end{theorem}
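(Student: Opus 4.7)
The plan is to reduce the infinite-dimensional MLE problem to a finite-dimensional concave program through two successive shape reductions, and then establish existence, uniqueness, and the claimed structure by standard convex-analytic arguments. Throughout, I work with the log-likelihood $L(f) = \sum_{i=1}^n f(X_i)$ over concave $f$ subject to $\int e^{f} = 1$.

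First I would show that the MLE must vanish outside the convex hull of the data. Given any candidate concave $f$ with $\int e^{f} = 1$, define $\tilde{f}$ to equal $f$ on $[X_{(1)}, X_{(n)}]$ and $-\infty$ elsewhere; then $\tilde{f}$ is still concave, the likelihood contributions $\tilde{f}(X_i) = f(X_i)$ are unchanged, and $\int e^{\tilde{f}} \leq 1$ with strict inequality unless $f$ was already supported on $[X_{(1)}, X_{(n)}]$. Adding the nonnegative constant $-\log \int e^{\tilde{f}}$ renormalizes to a density whose log-likelihood is strictly higher unless $f$ already vanishes outside the convex hull. Next I reduce to piecewise-linear shapes: given any concave $f$ supported on $[X_{(1)}, X_{(n)}]$, let $\tilde{f}$ denote the piecewise-linear interpolant of $f$ through $(X_{(i)}, f(X_{(i)}))$. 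By concavity, chords lie weakly below the graph, so $\tilde{f} \leq f$ pointwise; hence $\int e^{\tilde{f}} \leq 1$, and the same renormalization trick strictly improves the likelihood unless $f = \tilde{f}$. This forces any maximizer to be piecewise linear with knots contained in $\{X_1, \ldots, X_n\}$.

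With these reductions, the MLE is the solution to the finite-dimensional concave program
\[
\max_{\vec{v} \in \R^n} \sum_{i=1}^n v_i \quad \text{s.t.} \quad \int_{X_{(1)}}^{X_{(n)}} e^{L_{\vec{v}}(x)}\, dx = 1,
\]
where $\vec{v} = (v_1, \ldots, v_n)$ prescribes the values at the ordered knots $X_{(i)}$, $L_{\vec{v}}$ is the piecewise-linear interpolant, and concavity of $L_{\vec{v}}$ imposes additional linear inequalities on successive slopes. Existence follows by showing coercivity of the equivalent Lagrangian-style objective $\vec{v} \mapsto \sum_i v_i - n\log \int e^{L_{\vec{v}}}$: components $v_i$ cannot escape to $+\infty$ without forcing the log-partition term to dominate, and they cannot all escape to $-\infty$ because the objective vanishes in that limit, so a maximizer lies in a compact sublevel set. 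Uniqueness follows from strict concavity of this objective in directions where $L_{\vec{v}}$ is nonconstant, which in turn comes from strict convexity of $\log \int e^{L_{\vec{v}}}$ against nonconstant perturbations via Hölder's inequality.

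The hardest step will be the coercivity argument underpinning existence. The two shape reductions are essentially algebraic and the uniqueness argument is a standard convexity computation, but ruling out simultaneous unboundedness of the coordinates $v_i$ requires coupling the concavity constraints with the normalization: one must separately handle the case where some knots retreat to $-\infty$ while others inflate, so that the integral constraint reallocates mass and forces the remaining coordinates to saturate. A careful case analysis on which subset of knots remains finite, combined with the linear concavity inequalities on consecutive slopes, closes this gap and completes the proof.
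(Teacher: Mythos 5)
The paper does not actually prove this statement: it is imported verbatim from the log-concave MLE literature (the theorem is attributed to \cite{rufibach} and \cite{pal}), so there is no in-paper argument to compare against. Your sketch is, in substance, the standard proof from those references: (i) truncate to the convex hull of the data, (ii) replace a concave candidate by its piecewise-linear interpolant through the order statistics, observe that both operations preserve the likelihood terms while shrinking $\int e^{f}$, and renormalize to get a strict improvement; (iii) reduce to a finite-dimensional concave program in the knot values and argue existence by coercivity and uniqueness by strict concavity of the log-partition term. The two shape reductions and the H\"older-based uniqueness argument are correct as you state them. The one place where your write-up is not quite right is the existence step: the objective $\vec{v} \mapsto \sum_i v_i - n \log \int e^{L_{\vec{v}}}$ is \emph{invariant} under adding a common constant to all coordinates (the shift cancels between the two terms), so it does not ``vanish'' as all $v_i \to -\infty$ at a common rate; coercivity can only hold modulo this one-dimensional direction, and you must either quotient it out or fix the normalization $\int e^{L_{\vec{v}}} = 1$ before extracting a compact sublevel set. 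That is exactly how the cited papers handle it, so this is a repairable omission rather than a wrong approach. One further caution: the theorem as printed in the paper has a sign slip (it writes $\hat{\phi}_n = \exp \hat{f}_n$ with $\hat{f}$ equal to $\infty$ outside $[X_{(1)}, X_{(n)}]$, which would make the density infinite there); your convention, with a concave log-density equal to $-\infty$ off the data range, is the correct one and is what the statement intends.
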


For the generation of our example problem, we use 10 time steps
and a two dimensional state space, motivated by components representing position and
velocity. The dynamics matrices $F_{t}$ corresponds to the physical
dynamics that would occur in such a situation. We produce a ``true''
sequence of states to be estimated by generating dynamical system
noise according to a $\mathcal{N}(0,I)$ distribution.
We take the measurement operators $H_{t}$ be the sum of the components.
Lastly, we construct measurements from a sample of
Laplace$(0,1)$ measurement noise.  

For the formulation of the MAP problem, we assume that the dynamical
system noise was generated according to $\mathcal{N}(0,I)$
distribution. For the measurement noise, we construct a log-concave
MLE estimator $e^{-\hat{g}}$ of the density from a sample of size 100 generated from
a Laplace(0,1) distribution. $\hat{g}$ and its convex conjugate $\hat{g}^{*}$
are illustrated in figure \ref{figure1}.

\begin{figure}[H]
\centering
\begin{subfigure}{.5\textwidth}
  \centering
  \includegraphics[width=\linewidth]{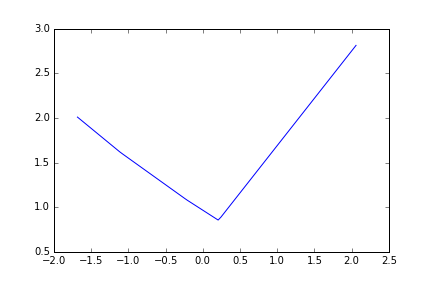}
  \caption{$\hat{g}$}
\end{subfigure}%
\begin{subfigure}{.5\textwidth}
  \centering
  \includegraphics[width=\linewidth]{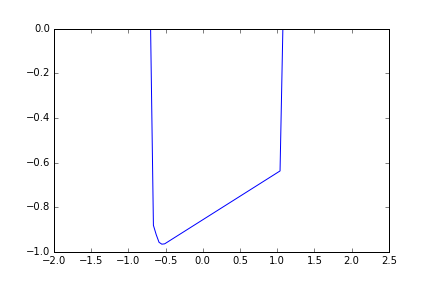}
  \caption{$\hat{g}^{*}$}
\end{subfigure}%
\caption{$\hat{g}$ and $\hat{g}^{*}$, where the MLE density is
$e^{-g(x)}$}
\label{figure1}
\end{figure}

The MAP problem \ref{prob:EstConv} is then
\begin{align*} \label{P:example} \tag{$\mathcal{P}_{ex}$}
\min_{x_{0},...,x_{10}} & \sum_{t=0}^{10}  \frac{1}{2} \norm{w_{t}}^{2} + \hat{g}(z_{t}- (\begin{array}{cc} 1 &
1 \end{array}) \cdot x_{t}) \\
\text{s.t. } & x_{t+1} = \left( \begin{array}{cc} 1 & 1 \\ 0 & 1
\end{array} \right) x_{t} + w_{t+1}, \;\;\; t=0,...,T-1\\
& x_{0} = w_{0} 
\end{align*}

According to \ref{MainTheorem} this problem has as its dual the
control problem
\begin{align*} \label{D:example} \tag{$\mathcal{D}_{ex}$}
\max_{u_{0},...,u_{10}} & \sum_{t=0}^{10}  \frac{1}{2} \norm{y_{t}}^{2} + \hat{g}^{*}(u_{t}) - u_{t}' z_{t} \\
\text{s.t. } & y_{t} = \left( \begin{array}{cc} 1 & 0 \\ 1 & 1
\end{array} \right) y_{t+1} + \left( \begin{array}{c} 1 \\ 1 \end{array}
\right) u_{t}, \;\;\; t=0,...,T-1 \\
& y_{T} = \left( \begin{array}{c} 1 \\ 1 \end{array} \right) u_{T} 
\end{align*}

Because our dynamical system noise in \ref{P:example} has full
support, Theorem \ref{StrongDual} guarantees that strong duality holds
between the problems, and that the dual control problem attains its
solution. Assume that we have solved this control problem and have a
corresponding optimal control $(u^{*}, y^{*})$. Since an optimal
estimate $(w^{*}, x^{*})$ gives a saddle point $(w^{*}, x^{*}; u^{*},
y^{*})$ to the Lagrangian $L$ in
Theorem \ref{SupervectorDual}, it follows from the
proof of this theorem that $w^{*}$ minimizes $f(w) - w'y^{*}$. In our
problem $f$, when $f(w) = \frac{1}{2} \norm{w}^2$, which yields the
relationship $w^{*} = y^{*}$. This is similar to the
relationship between primal and dual solutions in the Fenchel Duality
framework. See \cite{Bertsekas}[Prop. 5.3.8] for further details. Note that this
allows us to reconstruct a primal solution from a dual
solution and vice versa. In particular, the relationship between
$y^{*}$ and $w^{*}$ is
linear when the dynamical system noise is assumed to
be Gaussian.

Solving \ref{D:example} to optimality \footnote{Experiments
available from first author's website:
math.ucdavis.edu/{\textasciitilde}rbassett} gives
$y^{*}$, from which we generate $w^{*}$ and then an optimal estimate
$x^{*}$. Figure \ref{figure2} contains a plot of the estimated and
true state.

\begin{figure}
\centering
  \includegraphics[width=\linewidth]{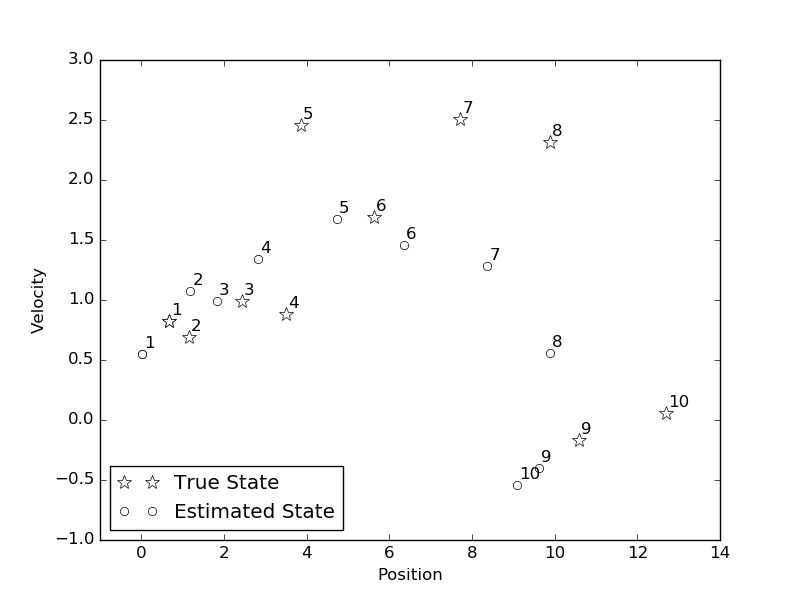}
\caption{Dual Reconstruction of State Estimate}
\label{figure2}
\end{figure}

Though we have demonstrated a convenient technique to generate
solutions for an estimation problem from the solution to its dual
control problem, in this
example we have no
reason to believe that solving \ref{D:example} is any
easier than solving the original problem \ref{P:example}.
Nevertheless, the results in this and previous sections provide motivation for further
investigation into applying control algorithms to solve estimation
problems and vice versa.

\printbibliography
\end{document}